\newtheorem{theorem}{Theorem}[section]
\newtheorem{proposition}[theorem]{Proposition}
\newtheorem{remark}[theorem]{Remark}
\numberwithin{theorem}{section}
\begin{document}
\title[Covariant version of the Stinespring theorem for Hilbert $C^{\ast }$%
-modules]{Covariant version of the Stinespring type theorem for Hilbert $%
C^{\ast }$-modules}
\author{Maria Joi\c{t}a}
\address{Department of Mathematics \\
University of Bucharest\\
Bd. Regina Elisabeta nr. 4-12\\
Bucharest, Romania\\
Phone: + 4 - 031- 8066770}
\email{mjoita@fmi.unibuc.ro}
\urladdr{http://sites.google.com/a/g.unibuc.ro/maria-joita/}
\subjclass[2000]{Primary 46L08}
\keywords{covariant completely positive maps, covariant representations,
Hilbert $C^{\ast }$-modules, crossed products. }

\begin{abstract}
We prove a covariant version of the Stinespring theorem for Hilbert $C^{\ast
}$-modules.
\end{abstract}

\maketitle

\section{\protect\bigskip Introduction}

A completely positive linear map from a $C^{\ast }$-algebra $A$ to another $%
C^{\ast }$-algebra $B$ is a map $\varphi :A\rightarrow B$ with the property
that $\left[ \varphi \left( a_{ij}\right) \right] _{i.j=1}^{n}$ is a
positive element in the $C^{\ast }$-algebra $M_{n}(B)$ of all $n\times n$
matrices with elements in $B$ for all positive matrices $\left[ a_{ij}\right]
_{i.j=1}^{n}$ in $M_{n}(A)$ and for all positive integers $n$.$\ $The study
of completely positive maps is motivated by the applications of the theory
of completely positive maps to quantum information theory (operator valued
completely positive maps on $C^{\ast }$-algebras are used as mathematical
model for quantum operations) and quantum probability.

Sitinespring \cite{9} shown that a completely positive map $\varphi
:A\rightarrow L(H)$ is of the form $\varphi \left( \cdot \right) =V^{\ast
}\pi \left( \cdot \right) V$ where $\pi $ is a $\ast $-representation of $A$
on a Hilbert space $K$ and $V\ $is a bounded linear operator from $H\ $to $K.
$

Hilbert $C^{\ast }$-modules are generalizations of Hilbert spaces and $%
C^{\ast }$-algebras. In \cite{3} it is proved a version of the Stinespring
theorem for completely positive map on Hilbert $C^{\ast }$-modules. In this
paper, we will prove a version of the covariant Stinespring theorem for
Hilbert $C^{\ast }$-modules.

A Hilbert $C^{\ast }$-module $X$ over a $C^{\ast }$-algebra $A$ (or a
Hilbert $A$-module) is a linear space that is also a right $A$-module,
equipped with an $A$-valued inner product $\left\langle \cdot ,\cdot
\right\rangle $ that is $\mathbb{C}$- and $A$-linear in the second variable
and conjugate linear in the first variable such that $X$ is complete with
the norm $\left\Vert x\right\Vert =\left\Vert \left\langle x,x\right\rangle
\right\Vert ^{\frac{1}{2}}.$ $X$ is full if the closed bilateral $\ast $%
-sided ideal $\left\langle X,X\right\rangle $ of $A$ generated by $%
\{\left\langle x,y\right\rangle ;x,y\in X\}$ coincides with $A.$

\textit{A} \textit{representation }of $X$ on the Hilbert spaces $H$ and $K$
is a map $\pi _{X}:X\rightarrow L(H,K)$ with the property that there is a $%
\ast $-representation $\pi _{A}$ of $A$ on the Hilbert space $H$ such that%
\begin{equation*}
\left\langle \pi _{X}(x),\pi _{X}(y)\right\rangle =\pi _{A}\left(
\left\langle x,y\right\rangle \right) 
\end{equation*}%
for all $x,y\in X$. If $X$ is full, then the $\ast $-representation $\pi _{A}
$ associated to $\pi _{X}$ is unique. A representation $\pi
_{X}:X\rightarrow L(H,K)$ of $X$ is \textit{nondegenerate} if $\left[ \pi
_{X}(X)H\right] =K$ and $\left[ \pi _{X}(X)^{\ast }K\right] =H$ (here, $[Y]$
denotes the closed subspace of a Hilbert space $Z$ generated by the subset $%
Y\subseteq Z$). Two representations $\pi _{X}:X\rightarrow L(H,K)$ and $\pi
_{X}^{^{\prime }}:X\rightarrow L(H^{^{\prime }},K^{^{\prime }})$ are \textit{%
unitarily equivalent }if there are two unitary operators $U_{1}\in
L(H,H^{^{\prime }})$ and $U_{2}\in L(K,K^{^{\prime }})$ such that $U_{2}\pi
_{X}(x)=\pi _{X}^{^{\prime }}(x)U_{1}$ for all $x$ in $X$ \cite{1}.

A map $\Phi :X\rightarrow L(H,K)$ is called \textit{a completely positive
map on }$X$ if there is a completely positive linear map $\varphi
:A\rightarrow L(H)$ such that%
\begin{equation*}
\left\langle \Phi \left( x\right) ,\Phi \left( y\right) \right\rangle
=\varphi \left( \left\langle x,y\right\rangle \right)
\end{equation*}%
for all $x$ and $y$ in $X$. If $X$ is full, then the completely positive map 
$\varphi $ associated to $\Phi $ is unique. If $\Phi :X\rightarrow L(H,K)$
is a completely positive map on $X$, then $\Phi $ is linear and continuous.

B.V.R. Bhat, G. Ramesh, and K. Sumesh \cite{3} provided a Stinespring
construction associated to a completely positive map $\Phi $ on a Hilbert $%
C^{\ast }$-module $X$ in terms of the Stinespring construction associated to
the underlying completely positive map $\varphi .$ In Section 2 we present a
Stinespring construction associated to a completely positive map on a full
Hilbert $C^{\ast }$-module, the construction is similar to the construction
given in \cite[Theorem 2.1]{3} but it is not given in terms of the
underlying completely positive map.

A\textit{\ morphism of Hilbert }$C^{\ast }$\textit{-modules} \cite{2} or a
generalized isometry \cite{10} is a map $\Psi :X\rightarrow Y$ from a
Hilbert $A$-module $X$ to a Hilbert $B$-module $Y$ with the property that
there is a $C^{\ast }$-morphism $\psi :A\rightarrow B$ such that 
\begin{equation*}
\left\langle \Psi \left( x\right) ,\Psi \left( y\right) \right\rangle =\psi
\left( \left\langle x,y\right\rangle \right) 
\end{equation*}%
for all $x$ and $y$ in $X$. If $X$ is full, then the underlying $C^{\ast }$%
-morphism of $\Psi $ is unique, in fact $\Psi $ is a ternary morphism \cite[%
Theorem 2.1]{10}.  A map $\Psi :X\rightarrow Y$ is \textit{an isomorphism of
Hilbert }$C^{\ast }$\textit{-modules} if it is invertible, $\Psi $ and $\Psi
^{-1}$ are morphisms of Hilbert $C^{\ast }$-modules.

Suppose that $G$ is a locally compact group, $\Delta $ is the modular
function of $G$ with respect to left invariant Haar measure $ds$. \textit{A
continuous action} of $G$ on a full Hilbert $A$-module $X$ is a group
morphism $t\mapsto \eta _{t}$ from $G$ to Aut$(X)$, the group of all
isomorphisms of Hilbert $C^{\ast }$-modules from $X$ to $X$, such that the
map $(t,x)\mapsto \eta _{t}\left( x\right) $ from $G\times X$ to $X$ is
continuous. The triple $\left( G,\eta ,X\right) $ will be called a dynamical
system on Hilbert $C^{\ast }$-modules. Any $C^{\ast }$-dynamical system $%
\left( G,\alpha ,A\right) $ can be regarded as a dynamical system on Hilbert 
$C^{\ast }$-modules.

Let $t\mapsto u_{t}$ and $t\mapsto u_{t}^{\prime }$ be two unitary $\ast $%
-representations of $G$ on the Hilbert spaces $H$ and $K$. A completely
positive map $\Phi :X\rightarrow L(H,K)$ is $\left( u^{\prime },u\right) $%
\textit{-covariant} with respect to $(G,\eta ,X)$ if 
\begin{equation*}
\Phi \left( \eta _{t}\left( x\right) \right) =u_{t}^{\prime }\Phi \left(
x\right) u_{t}^{\ast }
\end{equation*}%
for all $x\in X$ and for all $t\in G$. Clearly, if $\Phi :A\rightarrow L(H)$
is a completely positive map $u$-covariant with respect to the $C^{\ast }$%
-dynamical system $(G,\alpha ,A)$, then it is  $\left( u,u\right) $%
-covariant with respect to the dynamical system on Hilbert $C^{\ast }$%
-modules, $(G,\alpha ,A).$

In Section 3, we provide a covariant version of the Stinespring theorem, and
in Section 4, we show that any covariant completely positive map $\Phi $
with respect to $\left( G,\eta ,X\right) $ induces a completely positive map
on the crossed product $G\times _{\eta }X.$

\section{The Stinespring type theorem for Hilbert $C^{\ast }$-modules}

\begin{proposition}
Let $\pi _{X}:X\rightarrow L(H,K)$ be a representation of $X$, $V\in L(H)$
and $W\in L(K)$ a coisometry. Then the map $\Phi :X\rightarrow L(H,K)$
defined by 
\begin{equation*}
\Phi \left( x\right) =W^{\ast }\pi _{X}\left( x\right) V
\end{equation*}%
for all $x\in X$ is a completely positive map.
\end{proposition}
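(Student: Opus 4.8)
The plan is to verify directly the defining property of a completely positive map on $X$: I must exhibit a completely positive linear map $\varphi :A\rightarrow L(H)$ satisfying $\left\langle \Phi \left( x\right) ,\Phi \left( y\right) \right\rangle =\varphi \left( \left\langle x,y\right\rangle \right) $ for all $x,y\in X$. The key structural fact I would use is that $L(H,K)$ is itself a Hilbert $L(H)$-module under the inner product $\left\langle S,T\right\rangle =S^{\ast }T$, so that $\left\langle \Phi \left( x\right) ,\Phi \left( y\right) \right\rangle =\Phi \left( x\right) ^{\ast }\Phi \left( y\right) $.

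First I would compute, for $x,y\in X$,
\[
\left\langle \Phi \left( x\right) ,\Phi \left( y\right) \right\rangle =\left( W^{\ast }\pi _{X}\left( x\right) V\right) ^{\ast }\left( W^{\ast }\pi _{X}\left( y\right) V\right) =V^{\ast }\pi _{X}\left( x\right) ^{\ast }\left( WW^{\ast }\right) \pi _{X}\left( y\right) V.
\]
The hypothesis that $W$ is a coisometry enters exactly here: $WW^{\ast }=I_{K}$, which collapses the middle factor and leaves $V^{\ast }\pi _{X}\left( x\right) ^{\ast }\pi _{X}\left( y\right) V=V^{\ast }\left\langle \pi _{X}\left( x\right) ,\pi _{X}\left( y\right) \right\rangle V$.

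Next, since $\pi _{X}$ is a representation of $X$, there is an associated $\ast $-representation $\pi _{A}:A\rightarrow L(H)$ with $\left\langle \pi _{X}\left( x\right) ,\pi _{X}\left( y\right) \right\rangle =\pi _{A}\left( \left\langle x,y\right\rangle \right) $. Substituting this, I obtain $\left\langle \Phi \left( x\right) ,\Phi \left( y\right) \right\rangle =V^{\ast }\pi _{A}\left( \left\langle x,y\right\rangle \right) V$. I would then define $\varphi :A\rightarrow L(H)$ by $\varphi \left( a\right) =V^{\ast }\pi _{A}\left( a\right) V$, so that the required identity $\left\langle \Phi \left( x\right) ,\Phi \left( y\right) \right\rangle =\varphi \left( \left\langle x,y\right\rangle \right) $ holds by construction.

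It remains to record that $\varphi $ is completely positive. This is precisely the Stinespring form $V^{\ast }\pi _{A}\left( \cdot \right) V$ recalled in the introduction: for any positive matrix $\left[ a_{ij}\right] \in M_{n}(A)$ and any $\xi _{1},\dots ,\xi _{n}\in H$, setting $\eta _{i}=V\xi _{i}$ gives
\[
\sum_{i,j}\left\langle \varphi \left( a_{ij}\right) \xi _{j},\xi _{i}\right\rangle =\sum_{i,j}\left\langle \pi _{A}\left( a_{ij}\right) \eta _{j},\eta _{i}\right\rangle \geq 0,
\]
since $\left[ \pi _{A}\left( a_{ij}\right) \right] $ is positive in $M_{n}(L(H))$ because the $\ast $-homomorphism $\pi _{A}$ is completely positive. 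Hence each $\left[ \varphi \left( a_{ij}\right) \right] $ is positive, $\varphi $ is completely positive, and therefore $\Phi $ is a completely positive map on $X$. I do not anticipate any genuine obstacle here; the only place the specific hypotheses are used is the coisometry cancellation $WW^{\ast }=I_{K}$, and the complete positivity of $\varphi $ is the routine (easy) direction of Stinespring's theorem.
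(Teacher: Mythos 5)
Your proof is correct and follows essentially the same route as the paper: compute $\left\langle \Phi(x),\Phi(y)\right\rangle$, cancel the coisometry via $WW^{\ast}=I_{K}$, reduce to $V^{\ast}\pi_{A}\left(\left\langle x,y\right\rangle\right)V$, and observe that $a\mapsto V^{\ast}\pi_{A}(a)V$ is completely positive. The only difference is that you spell out the routine verification that this compression of a $\ast$-representation is completely positive, which the paper simply asserts.
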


\begin{proof}
Indeed, we have 
\begin{eqnarray*}
\left\langle \Phi \left( x\right) ,\Phi \left( y\right) \right\rangle 
&=&\left\langle W^{\ast }\pi _{X}\left( x\right) V,W^{\ast }\pi _{X}\left(
y\right) V\right\rangle  \\
&=&\left\langle \pi _{X}\left( x\right) V,\pi _{X}\left( y\right)
V\right\rangle =V^{\ast }\pi _{A}\left( \left\langle x,y\right\rangle
\right) V
\end{eqnarray*}%
for all $x,y\in X$, and since the map $\varphi :A\rightarrow L(H)$ defined
by 
\begin{equation*}
\varphi \left( a\right) =V^{\ast }\pi _{A}\left( a\right) V
\end{equation*}%
is completely positive, $\Phi $ is completely positive.
\end{proof}

We show that an operator valued completely positive linear map $\Phi $ on a
full Hilbert $C^{\ast }$-module $X$ is of the form $\Phi \left( \cdot
\right) =W^{\ast }\pi _{X}\left( \cdot \right) V$, where $\pi _{X}$ is a
representation of $X$, $W\ $is a coisometry and $V$ is a bounded linear map.
Moreover, under some conditions this writing is unique up to unitary
equivalence.

\begin{theorem}
Let $X$ be a full Hilbert $C^{\ast }$-module over a $C^{\ast }$-algebra $A$, 
$H$ and $K$ two Hilbert spaces and $\Phi :X\rightarrow L(H,K)$ a completely
positive map. Then:

\begin{enumerate}
\item There are two Hilbert spaces $H_{\Phi }$ and $K_{\Phi }$, a
representation $\pi _{\Phi }:X\rightarrow L(H_{\Phi },K_{\Phi })$ of $X$, a
bounded linear operator $V_{\Phi }:H\rightarrow H_{\Phi }$ and a coisometry $%
W_{\Phi }:K\rightarrow K_{\Phi }$ such that:

\begin{enumerate}
\item $\Phi \left( x\right) =W_{\Phi }^{\ast }$ $\pi _{\Phi }\left( x\right)
V_{\Phi }$ for all $x\in X;$

\item $\left[ \pi _{\Phi }\left( X\right) V_{\Phi }H\right] =K_{\Phi };$

\item $\left[ \pi _{\Phi }\left( X\right) ^{\ast }W_{\Phi }K\right] =H_{\Phi
}.$
\end{enumerate}

\item If $H^{^{\prime }}$and $K^{\prime }$ are two Hilbert spaces, $\pi
_{X}:X\rightarrow L(H^{\prime },K^{\prime })$ a representation of $X,$ $%
V^{\prime }$ an element in $L(H,H^{\prime })$ and $W^{\prime }:K\rightarrow
K^{\prime }$ a coisometry that verify the following relations:

\begin{enumerate}
\item $\Phi \left( x\right) =W^{\prime \ast }$ $\pi _{X}\left( x\right)
V^{\prime }$ for all $x\in X;$

\item $\left[ \pi _{X}\left( X\right) V^{\prime }H\right] =K^{\prime };$

\item $\left[ \pi _{X}\left( X\right) ^{\ast }W^{\prime }K\right] =H^{\prime
},$
\end{enumerate}
\end{enumerate}

then there are two unitary operators $U_{1}\in L(H_{\Phi },H^{\prime })$ and 
$U_{2}\in L(K_{\Phi },K^{\prime })$ such that: $U_{2}\pi _{\Phi }\left(
x\right) =\pi _{X}\left( x\right) U_{1}\ $for all $x\in X,$ $V^{\prime
}=U_{1}V_{\Phi }$ and $W^{\prime }=U_{2}W_{\Phi }.$
\end{theorem}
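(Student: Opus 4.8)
The plan is to build the dilation directly from the completely positive map $\varphi:A\rightarrow L(H)$ underlying $\Phi$, imitating the GNS/Stinespring construction on two algebraic tensor products at once. On the algebraic tensor product $X\odot H$ I would put the sesquilinear form $\langle x\otimes\xi,y\otimes\eta\rangle=\langle\xi,\varphi(\langle x,y\rangle)\eta\rangle_{H}$; since $[\langle x_{i},x_{j}\rangle]$ is a positive matrix in $M_{n}(A)$ and $\varphi$ is completely positive, the scalar matrix $[\varphi(\langle x_{i},x_{j}\rangle)]$ is positive, so the form is positive semidefinite. Quotienting by its null space and completing yields $K_{\Phi}$. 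In the same way the form $\langle a\otimes\xi,b\otimes\eta\rangle=\langle\xi,\varphi(a^{\ast}b)\eta\rangle_{H}$ on $A\odot H$ produces $H_{\Phi}$, which is exactly the minimal Stinespring space of $\varphi$, carrying the $\ast$-representation $\pi_{\varphi}(a)[b\otimes\xi]=[ab\otimes\xi]$ and the operator $V_{\Phi}\xi=\lim_{\lambda}[e_{\lambda}\otimes\xi]$ (with $(e_{\lambda})$ an approximate unit of $A$) satisfying $\varphi(a)=V_{\Phi}^{\ast}\pi_{\varphi}(a)V_{\Phi}$.

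Next I would set $\pi_{\Phi}(x)[a\otimes\xi]=[xa\otimes\xi]$. A direct computation gives $\pi_{\Phi}(x)^{\ast}\pi_{\Phi}(y)=\pi_{\varphi}(\langle x,y\rangle)$, so $\pi_{\Phi}$ is a representation of $X$ with associated $\ast$-representation $\pi_{\varphi}$; moreover $\pi_{\Phi}(x)\pi_{\varphi}(a)=\pi_{\Phi}(xa)$ and $\pi_{\Phi}(x)^{\ast}[y\otimes\eta]=[\langle x,y\rangle\otimes\eta]$. For the coisometry I would first check that $[x\otimes\xi]\mapsto\Phi(x)\xi$ extends to a well-defined \emph{isometry} $S:K_{\Phi}\rightarrow K$; this is precisely the identity $\|\sum_{i}\Phi(x_{i})\xi_{i}\|^{2}=\sum_{i,j}\langle\xi_{i},\varphi(\langle x_{i},x_{j}\rangle)\xi_{j}\rangle=\|\sum_{i}[x_{i}\otimes\xi_{i}]\|_{K_{\Phi}}^{2}$, which follows from $\Phi(x)^{\ast}\Phi(y)=\varphi(\langle x,y\rangle)$. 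Setting $W_{\Phi}:=S^{\ast}$ makes $W_{\Phi}$ a coisometry. Since $\pi_{\Phi}(x)V_{\Phi}\xi=[x\otimes\xi]$, I get $W_{\Phi}^{\ast}\pi_{\Phi}(x)V_{\Phi}\xi=S[x\otimes\xi]=\Phi(x)\xi$, which is (a); the vectors $[x\otimes\xi]$ are dense, giving (b); and because $W_{\Phi}$ is surjective, $W_{\Phi}K=K_{\Phi}$, together with fullness of $X$ (so that the closed linear span of $\{\langle x,y\rangle:x,y\in X\}$ equals $A$), the set $\pi_{\Phi}(X)^{\ast}W_{\Phi}K$ spans the dense set $\{[\langle x,y\rangle\otimes\eta]\}$, giving (c).

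For uniqueness I would first observe that the primed data is again a \emph{minimal} Stinespring dilation of $\varphi$. Indeed, a coisometry is surjective, so $W'K=K'$; combining $H'=[\pi_{X}(X)^{\ast}W'K]=[\pi_{X}(X)^{\ast}K']$ from (c) with $K'=[\pi_{X}(X)V'H]$ from (b) and fullness gives $H'=[\pi_{X,A}(A)V'H]$, while expanding $\Phi(x)^{\ast}\Phi(y)$ and using $W'W^{\prime\ast}=I_{K'}$ gives $V^{\prime\ast}\pi_{X,A}(a)V'=\varphi(a)$. The uniqueness assertion in the classical Stinespring theorem then yields a unitary $U_{1}:H_{\Phi}\rightarrow H'$ with $U_{1}\pi_{\varphi}(a)=\pi_{X,A}(a)U_{1}$ and $U_{1}V_{\Phi}=V'$. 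I would define $U_{2}$ on the dense subspace $\pi_{\Phi}(X)V_{\Phi}H$ by $U_{2}(\pi_{\Phi}(x)V_{\Phi}\xi)=\pi_{X}(x)V'\xi$; both inner products reduce to $\langle\xi,\varphi(\langle x,y\rangle)\eta\rangle_{H}$, so $U_{2}$ is a well-defined unitary by (b).

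Finally I would verify the intertwining and the relations on $V'$ and $W'$. Evaluating on the dense vectors $\pi_{\varphi}(a)V_{\Phi}\xi=[a\otimes\xi]$ and using $\pi_{\Phi}(x)\pi_{\varphi}(a)=\pi_{\Phi}(xa)$ reduces $U_{2}\pi_{\Phi}(x)=\pi_{X}(x)U_{1}$ to the module identity $\pi_{X}(xa)=\pi_{X}(x)\pi_{X,A}(a)$; I would prove the latter from the representation axioms by checking that both sides have equal inner product against every $\pi_{X}(z)\zeta'$ and invoking $[\pi_{X}(X)H']=K'$. The relation $V'=U_{1}V_{\Phi}$ is already built in, and $W'=U_{2}W_{\Phi}$ follows by applying both sides of $W_{\Phi}^{\ast}U_{2}^{\ast}=W^{\prime\ast}$ to the dense vectors $\pi_{X}(x)V'\xi$, where each side returns $\Phi(x)\xi$. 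The main obstacle is organizational rather than computational: correctly coupling the two dilation spaces so that the single unitary $U_{1}$ coming from the scalar Stinespring theorem is compatible with $U_{2}$, which hinges on the module identity $\pi_{X}(xa)=\pi_{X}(x)\pi_{X,A}(a)$ and on verifying that the primed data inherits minimality from the coisometry condition and fullness.
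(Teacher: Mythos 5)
Your proof is correct and follows essentially the same route as the paper: both take $H_{\Phi}$ to be the minimal Stinespring space of the underlying map $\varphi$, realize $K_{\Phi}$ as (a copy of) $\left[ \Phi (X)H\right] $ with $W_{\Phi}$ the associated coisometry, define $\pi _{\Phi }$ through the module action via $\Phi \left( xa\right) $, and obtain uniqueness by checking that the primed data is a minimal Stinespring dilation of $\varphi $ and then extending the resulting $U_{1}$ by a densely defined unitary $U_{2}$. The only difference is cosmetic: you build $K_{\Phi }$ abstractly from $X\odot H$ and take $W_{\Phi }=S^{\ast }$ for the canonical isometry $S$, whereas the paper works directly with the subspace $\left[ \Phi (X)H\right] \subseteq K$ and the orthogonal projection onto it.
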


\begin{proof}
$\left( 1\right) $ Let $\varphi $ be the completely positive linear map
associated to $\Phi $ and let $(\pi _{\varphi },H_{\varphi },V_{\varphi })$
be the Stinespring construction associated to $\varphi $ \cite[Theorem 5.6
(1)]{7}. Let $H_{\Phi }=H_{\varphi }$, $V_{\Phi }=V_{\varphi },K_{\Phi }=%
\left[ \Phi (X)H\right] $ and $W_{\Phi }$ the projection of $K$ on $K_{\Phi }
$. Exactly as in the proof of Theorem 2.1 \cite{3} it is shown that the map $%
\pi _{\Phi }:X\rightarrow L(H_{\Phi },K_{\Phi })$ defined by $\pi _{\Phi
}\left( x\right) \left( \tsum\limits_{i=1}^{n}\pi _{\varphi }\left(
a_{i}\right) V_{\Phi }h_{i}\right) =\tsum\limits_{i=1}^{n}\Phi \left(
xa_{i}\right) h_{i}$ is a representation of $X$ that verifies the relations $%
(a)$ and $(b)$. From%
\begin{eqnarray*}
\left[ \pi _{\Phi }\left( X\right) ^{\ast }W_{\Phi }K\right]  &=&\left[ \pi
_{\Phi }\left( X\right) ^{\ast }K_{\Phi }\right] =\left[ \pi _{\Phi }\left(
X\right) ^{\ast }\pi _{\Phi }\left( X\right) V_{\Phi }H\right]  \\
&=&\left[ \pi _{\varphi }\left( \left\langle X,X\right\rangle \right)
V_{\Phi }H\right] =\left[ \pi _{\varphi }\left( A\right) V_{\Phi }H\right]
=H_{\Phi }
\end{eqnarray*}%
we deduce that the relation $(c)$ is verified too.

$\left( 2\right) $ If $\pi _{A}$:$A\rightarrow L(H^{\prime })$ is the $\ast $%
-representation associated to $\pi _{X}$, then: 
\begin{eqnarray*}
\varphi \left( \left\langle x,y\right\rangle \right)  &=&\left\langle \Phi
\left( x\right) ,\Phi \left( y\right) \right\rangle =\left( W^{\prime \ast
}\pi _{X}\left( x\right) V^{\prime }\right) ^{\ast }W^{\prime \ast }\pi
_{X}\left( y\right) V^{\prime } \\
&=&V^{\prime \ast }\pi _{X}\left( x\right) W^{\prime }W^{\prime \ast }\pi
_{X}\left( y\right) V^{\prime }=V^{\prime \ast }\pi _{A}\left( \left\langle
x,y\right\rangle \right) V^{\prime }
\end{eqnarray*}%
for all $x$ and $y$ in $X$, and%
\begin{eqnarray*}
\left[ \pi _{A}\left( \left\langle X,X\right\rangle \right) V^{\prime }H%
\right]  &=&\left[ \pi _{X}\left( X\right) ^{\ast }\pi _{X}\left( X\right)
V^{\prime }H\right] =\left[ \pi _{X}\left( X\right) ^{\ast }K^{\prime }%
\right]  \\
&=&\left[ \pi _{X}\left( X\right) ^{\ast }W^{\prime }K\right] =H^{\prime }.
\end{eqnarray*}%
Therefore, $\left( \pi _{A},H^{\prime },V^{\prime }\right) $ is unitarily
equivalent to the Stinespring construction associated to $\varphi $ \cite[%
Theorem 5.6 (2)]{7}, and so there is a unitary operator $U_{1}\in L(H_{\Phi
},H^{\prime })$ such that $\pi _{A}(a)=U_{1}{}\pi _{\varphi }\left( a\right)
U_{1}^{\ast }$ and $V^{\prime }=U_{1}V_{\Phi }$. As in the proof of Theorem
2.4 \cite{3} we show that the there is a unitary operator $U_{2}:K_{\Phi }$ $%
\rightarrow K^{\prime }$ such that 
\begin{equation*}
U_{2}\left( \tsum\limits_{i=1}^{n}\pi _{\Phi }\left( x_{i}\right) V_{\Phi
}h_{i}\right) =\tsum\limits_{i=1}^{n}\pi _{X}\left( x_{i}\right) V^{\prime
}h_{i}
\end{equation*}%
and moreover, 
\begin{equation*}
U_{2}\pi _{\Phi }\left( x\right) =\pi _{X}\left( x\right) U_{1}\text{ and }%
W^{\prime }=U_{2}W_{\Phi }.
\end{equation*}
\end{proof}

\section{The covariant version of the Stinespring construction}

Let $(G,\eta ,X)$ be a dynamical system on Hilbert $C^{\ast }$-modules. 
\textit{A covariant representation }of $(G,\eta ,X)$ is a quadruple $\left(
\pi _{X},v,w,H,K\right) $ consists of two Hilbert spaces $H$ and $K$, a
representation $\pi _{X}:X\rightarrow L(H,K)$ of $X$, a unitary $\ast $%
-representation of $G$ on $H$, $t\mapsto v_{t}$, and a unitary $\ast $%
-representation of $G$ on $K$, $t\mapsto w_{t}$ such that 
\begin{equation*}
\pi _{X}\left( \eta _{t}\left( x\right) \right) =w_{t}\pi _{X}\left(
x\right) v_{t}^{\ast }
\end{equation*}%
for all $x\in X$ and for all $t\in G$. We say that the covariant
representation $\left( \pi _{X},v,w,H,K\right) $ is nondegenerate if the
representation $\pi _{X}$ is nondegenerate. Clearly, any covariant
representation of a $C^{\ast }$-dynamical system $\left( G,\alpha ,A\right) $
is a covariant representation of $\left( G,\alpha ,A\right) $ regarded as
dynamical system on Hilbert $C^{\ast }$-modules.

Any continuous action $t\mapsto \eta _{t}$ of $G$ on $X$ induces a unique
continuous action $t\mapsto \alpha _{t}^{\eta }$ of $G$ on $A$ such that $%
\alpha _{t}^{\eta }\left( \left\langle x,y\right\rangle \right)
=\left\langle \eta _{t}\left( x\right) ,\eta _{t}\left( x\right)
\right\rangle $ for all $x,y\in X$ and for all $t\in G$ \cite{4}.

\begin{remark}
A (nondegenerate) \textit{covariant representation }$\left( \pi
_{X},v,w,H,K\right) $ of $(G,\eta ,X)$ induces a (nondegenerate)
representation of $(G,\alpha ^{\eta },A)$. Indeed, if $\pi _{A}$ is the $%
\ast $-representation associated to $\pi _{X}$, then%
\begin{eqnarray*}
\pi _{A}\left( \alpha _{t}^{\eta }\left( \left\langle x,y\right\rangle
\right) \right) &=&\pi _{A}\left( \left\langle \eta _{t}\left( x\right)
,\eta _{t}\left( y\right) \right\rangle \right) =\left\langle \pi _{X}\left(
\eta _{t}\left( x\right) \right) ,\pi _{X}\left( \eta _{t}\left( y\right)
\right) \right\rangle \\
&=&\left\langle w_{t}\pi _{X}\left( x\right) v_{t}^{\ast },w_{t}\pi
_{X}\left( y\right) v_{t}^{\ast }\right\rangle =v_{t}\pi _{A}\left(
\left\langle x,y\right\rangle \right) v_{t}^{\ast }
\end{eqnarray*}%
for all $x,y\in X$ and for all $t\in G$. Therefore $\left( \pi
_{A},v,H\right) $ is a covariant representation of $(G,\alpha ^{\eta },A)$.
\end{remark}

Let $t\mapsto u_{t}$ and $t\mapsto $ $u_{t}^{\prime }$ be two unitary $\ast $%
-representations of $G$ on the Hilbert spaces $H$ and $K$.

\begin{remark}
If $\Phi :X\rightarrow L(H,K)$ is a completely positive map, $\left(
u^{\prime },u\right) $-covariant with respect to $(G,\eta ,X)$, then the
completely positive map $\varphi $ associated to $\Phi $ is $u$-covariant
with respect to $(G,\alpha ^{\eta },A)$.

Indeed, we have 
\begin{eqnarray*}
\varphi \left( \alpha _{t}^{\eta }\left( \left\langle x,y\right\rangle
\right) \right) &=&\varphi \left( \left\langle \eta _{t}\left( x\right)
,\eta _{t}\left( y\right) \right\rangle \right) =\left\langle \Phi \left(
\eta _{t}\left( x\right) \right) ,\Phi \left( \eta _{t}\left( y\right)
\right) \right\rangle \\
&=&\left\langle u_{t}^{\prime }\Phi \left( x\right) u_{t}^{\ast
},u_{t}^{\prime }\Phi \left( y\right) u_{t}^{\ast }\right\rangle
=u_{t}\varphi \left( \left\langle x,y\right\rangle \right) u_{t}^{\ast }
\end{eqnarray*}%
for all $x,y\in X$ and for all $t\in G$.
\end{remark}

\begin{proposition}
Let $\left( \pi _{X},v,w,H,K\right) $ be a covariant representation of $%
(G,\eta ,X)$, $V\in L(H)$, $W\in L(K)$ a coisometry, $t\mapsto u_{t}$ and $%
t\mapsto u_{t}^{\prime }$ two unitary $\ast $-representations of $G$ on $H$
respectively $K$ such that $v_{t}V=Vu_{t}$ and $w_{t}W=Wu_{t}^{\prime }$ \
for all $t\in G$. Then the map $\Phi :X\rightarrow L(H,K)$ defined by 
\begin{equation*}
\Phi \left( x\right) =W^{\ast }\pi _{X}\left( x\right) V
\end{equation*}%
for all $x\in X$ is a completely positive map, $\left( u^{\prime },u\right) $%
-covariant with respect to $(G,\eta ,X)$.
\end{proposition}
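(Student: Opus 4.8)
The plan is to derive both assertions directly, leaning on the Proposition of Section~2 for complete positivity and on a short adjoint computation for covariance. For the first assertion, observe that a covariant representation $\left(\pi_X,v,w,H,K\right)$ is in particular a representation $\pi_X:X\rightarrow L(H,K)$ of $X$, while $V\in L(H)$ and $W\in L(K)$ is a coisometry. Hence $\Phi\left(x\right)=W^{\ast}\pi_X\left(x\right)V$ falls exactly under the hypotheses of Proposition 2.1, and complete positivity of $\Phi$ follows at once; the associated completely positive map is $\varphi\left(\cdot\right)=V^{\ast}\pi_A\left(\cdot\right)V$, where $\pi_A$ is the $\ast$-representation associated to $\pi_X$.

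For the covariance, first I would convert the two intertwining hypotheses into the form needed to move the group action through $W^{\ast}$ and $V$. Since $v_t$ and $u_t$ are unitaries, multiplying $v_tV=Vu_t$ on the left by $v_t^{\ast}$ gives $V=v_t^{\ast}Vu_t$, hence $v_t^{\ast}V=Vu_t^{\ast}$; similarly, from $w_tW=Wu_t^{\prime}$ and unitarity one obtains $W^{\ast}w_t=u_t^{\prime}W^{\ast}$ after taking adjoints. These are precisely the relations required.

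With these in hand, the computation is immediate. Using the definition of $\Phi$, the intertwining relation $\pi_X\left(\eta_t\left(x\right)\right)=w_t\pi_X\left(x\right)v_t^{\ast}$ of the covariant representation, and then the two derived identities, I would write
\begin{equation*}
\Phi\left(\eta_t\left(x\right)\right)=W^{\ast}\pi_X\left(\eta_t\left(x\right)\right)V=W^{\ast}w_t\pi_X\left(x\right)v_t^{\ast}V=u_t^{\prime}W^{\ast}\pi_X\left(x\right)Vu_t^{\ast}=u_t^{\prime}\Phi\left(x\right)u_t^{\ast}
\end{equation*}
for all $x\in X$ and all $t\in G$, which is exactly $\left(u^{\prime},u\right)$-covariance with respect to $(G,\eta,X)$.

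I do not expect a genuine obstacle: the argument is a direct substitution once the hypotheses are arranged. The only point demanding care is the bookkeeping of adjoints and inverses in passing from $v_tV=Vu_t$ and $w_tW=Wu_t^{\prime}$ to $v_t^{\ast}V=Vu_t^{\ast}$ and $W^{\ast}w_t=u_t^{\prime}W^{\ast}$; this is where unitarity of all four representations is used. It is also worth noting that $W$ enters through $W^{\ast}$, so that the coisometry relation $WW^{\ast}=I$ (needed for the complete positivity part via Proposition 2.1) and the intertwining relation $W^{\ast}w_t=u_t^{\prime}W^{\ast}$ (needed for covariance) act on the correct side and are mutually compatible.
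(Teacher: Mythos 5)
Your proposal is correct and follows essentially the same route as the paper: complete positivity via Proposition 2.1, and covariance via the one-line computation $\Phi(\eta_t(x))=W^{\ast}w_t\pi_X(x)v_t^{\ast}V=u_t^{\prime}W^{\ast}\pi_X(x)Vu_t^{\ast}$. The only difference is that you spell out the derivation of $v_t^{\ast}V=Vu_t^{\ast}$ and $W^{\ast}w_t=u_t^{\prime}W^{\ast}$ from the hypotheses, which the paper leaves implicit.
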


\begin{proof}
By Proposition 2.1, the map $\Phi $ is completely positive. From 
\begin{equation*}
\Phi \left( \eta _{t}\left( x\right) \right) =W^{\ast }\pi _{X}\left( \eta
_{t}\left( x\right) \right) V=W^{\ast }w_{t}\pi _{X}\left( x\right)
v_{t}^{\ast }V=u_{t}^{\prime }W^{\ast }\pi _{X}\left( x\right) Vu_{t}^{\ast
}=u_{t}^{\prime }\Phi \left( x\right) u_{t}^{\ast }
\end{equation*}%
for all $x\in X$ and for all $t\in G$, we deduce that the completely
positive map $\Phi $ is $\left( u^{\prime },u\right) $-covariant.
\end{proof}

We show that an operator valued $\left( u^{\prime },u\right) $-covariant
completely positive map $\Phi $ on a full Hilbert $C^{\ast }$-module $X$ is
of the form $\Phi \left( \cdot \right) =W^{\ast }\pi _{X}\left( \cdot
\right) V$, where $\left( \pi _{X},v,w,H,K\right) $ is a covariant
representation of $(G,\eta ,X)$, $W\ $is a coisometry such that $%
w_{t}W=Wu_{t}^{\prime }$ for all $t\in G$ and $V$ is a bounded linear map
such that $v_{t}V=Vu_{t}$ \ for all $t\in G$. Moreover, under some
conditions this writing is unique up to unitary equivalence.

\begin{theorem}
Let $\Phi :X\rightarrow L(H,K)$ be a completely positive map, $\left(
u^{\prime },u\right) $-covariant with respect to $(G,\eta ,X)$. Then:

\begin{enumerate}
\item There are two Hilbert spaces $H_{\Phi }$ and $K_{\Phi }$, a covariant
representation $(\pi _{\Phi },v^{\Phi },$ $w^{\Phi },H_{\Phi },K_{\Phi })$
of $(G,\eta ,X)$, a linear operator $V_{\Phi }:H\rightarrow H_{\Phi }$ and a
coisometry $W_{\Phi }:K\rightarrow K_{\Phi }$ such that:

\begin{enumerate}
\item $\Phi \left( x\right) =W_{\Phi }^{\ast }$ $\pi _{\Phi }\left( x\right)
V_{\Phi }$ for all $x\in X;$

\item $v_{t}^{\Phi }V_{\Phi }=V_{\Phi }u_{t}$\ for all $t\in G;$

\item $w_{t}^{\Phi }W_{\Phi }=W_{\Phi }u_{t}^{\prime }$ for all $t\in G$

\item $\left[ \pi _{\Phi }\left( X\right) V_{\Phi }H\right] =K_{\Phi };$

\item $\left[ \pi _{\Phi }\left( X\right) ^{\ast }W_{\Phi }K\right] =H_{\Phi
}.$
\end{enumerate}

\item If $H^{^{\prime }}$and $K^{\prime }$ are two Hilbert spaces, $\left(
\pi _{X},v,w,H^{\prime },K^{\prime }\right) $ a covariant representation of $%
(G,\eta ,X)$, $V^{\prime }$ an element in $L(H,H^{\prime })$ and $W^{\prime
}:K\rightarrow K^{\prime }$ a coisometry which verify the following
relations:

\begin{enumerate}
\item $\Phi \left( x\right) =W^{\prime \ast }$ $\pi _{X}\left( x\right)
V^{\prime }$ for all $x\in X;$

\item $v_{t}V^{\prime }=V^{\prime }u_{t}\ $for all $t\in G;$

\item $w_{t}W^{\prime }=W^{\prime }u_{t}^{\prime }\ $ for all $t\in G;$

\item $\left[ \pi _{X}\left( X\right) V^{\prime }H\right] =K^{\prime };$

\item $\left[ \pi _{X}\left( X\right) ^{\ast }W^{\prime }K\right] =H^{\prime
},$
\end{enumerate}

then there are two unitary operators $U_{1}\in L(H_{\Phi },H^{\prime })$ and 
$U_{2}\in L(K_{\Phi },K^{\prime })$ such that: $U_{2}\pi _{\Phi }\left(
x\right) =\pi _{X}\left( x\right) U_{1},$ $v_{t}U_{1}=U_{1}v_{t}^{\Phi },$ $%
w_{t}U_{2}=U_{2}w_{t}^{\Phi },$ $V_{t}^{\prime }=U_{1}V_{\Phi }$ and $%
W^{\prime }=U_{2}W_{\Phi }.$
\end{enumerate}
\end{theorem}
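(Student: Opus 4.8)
The plan is to bootstrap from the non-covariant construction of Theorem 2.2 and equip its data with two unitary representations of $G$. For part $(1)$ I would keep the spaces $H_{\Phi }=H_{\varphi }$, $K_{\Phi }=\left[ \Phi (X)H\right] $ and the triple $(\pi _{\Phi },V_{\Phi },W_{\Phi })$ supplied by Theorem 2.2, so that $(a)$, $(d)$ and $(e)$ already hold. By Remark 3.2 the underlying map $\varphi $ is $u$-covariant with respect to $(G,\alpha ^{\eta },A)$, which lets me run the covariant Stinespring construction on $H_{\Phi }$: set $v_{t}^{\Phi }\left( \pi _{\varphi }(a)V_{\varphi }h\right) =\pi _{\varphi }\left( \alpha _{t}^{\eta }(a)\right) V_{\varphi }u_{t}h$. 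A direct inner-product computation, using $V_{\varphi }^{\ast }\pi _{\varphi }(\cdot )V_{\varphi }=\varphi (\cdot )$ together with $\varphi \left( \alpha _{t}^{\eta }(a^{\ast }b)\right) =u_{t}\varphi (a^{\ast }b)u_{t}^{\ast }$, shows that $v_{t}^{\Phi }$ preserves inner products; it is onto because $\alpha _{t}^{\eta }$ is an automorphism and $u_{t}$ is unitary, so $t\mapsto v_{t}^{\Phi }$ is a strongly continuous unitary representation. A limiting argument over an approximate unit of $A$, whose image under $\alpha _{t}^{\eta }$ is again an approximate unit, then yields relation $(b)$, namely $v_{t}^{\Phi }V_{\Phi }=V_{\Phi }u_{t}$.

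For $w^{\Phi }$ the key observation is that $K_{\Phi }=\left[ \Phi (X)H\right] $ is already invariant under $u^{\prime }$: by covariance $u_{t}^{\prime }\Phi (x)h=\Phi \left( \eta _{t}(x)\right) u_{t}h\in \Phi (X)H$, and likewise for $u_{t^{-1}}^{\prime }$, so $K_{\Phi }$ reduces each $u_{t}^{\prime }$ and I may simply set $w_{t}^{\Phi }:=u_{t}^{\prime }|_{K_{\Phi }}$. Relation $(c)$, $w_{t}^{\Phi }W_{\Phi }=W_{\Phi }u_{t}^{\prime }$, is then immediate since the projection $W_{\Phi }^{\ast }W_{\Phi }$ onto $K_{\Phi }$ commutes with $u_{t}^{\prime }$. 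The one identity that requires genuine checking is the covariance of the representation itself, $\pi _{\Phi }\left( \eta _{t}(x)\right) =w_{t}^{\Phi }\pi _{\Phi }(x)(v_{t}^{\Phi })^{\ast }$. I would verify it on the vectors $\pi _{\varphi }(a)V_{\varphi }h$ spanning a dense subspace: unwinding $(v_{t}^{\Phi })^{\ast }=v_{t^{-1}}^{\Phi }$, applying $\pi _{\Phi }(x)\left( \pi _{\varphi }(a)V_{\varphi }h\right) =\Phi (xa)h$ and then the covariance of $\Phi $, the claim reduces to the module identity $\eta _{t}\left( x\cdot \alpha _{t^{-1}}^{\eta }(a)\right) =\eta _{t}(x)\cdot a$, which holds because $\eta _{t}$ is an isomorphism of Hilbert $C^{\ast }$-modules with underlying automorphism $\alpha _{t}^{\eta }$. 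This compatibility between the action $\eta $ on $X$ and the induced action $\alpha ^{\eta }$ on $A$ is the heart of the matter, and the step I expect to be the main obstacle.

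For part $(2)$, the non-covariant uniqueness already established in Theorem 2.2$(2)$ hands me unitaries $U_{1}\in L(H_{\Phi },H^{\prime })$ and $U_{2}\in L(K_{\Phi },K^{\prime })$ with $U_{2}\pi _{\Phi }(x)=\pi _{X}(x)U_{1}$, $V^{\prime }=U_{1}V_{\Phi }$ and $W^{\prime }=U_{2}W_{\Phi }$, so that only the two intertwining relations $v_{t}U_{1}=U_{1}v_{t}^{\Phi }$ and $w_{t}U_{2}=U_{2}w_{t}^{\Phi }$ remain. For the first I would test on $\pi _{\varphi }(a)V_{\varphi }h$, using $U_{1}\pi _{\varphi }(a)=\pi _{A}(a)U_{1}$, $U_{1}V_{\varphi }=V^{\prime }$, the hypothesis $v_{t}V^{\prime }=V^{\prime }u_{t}$, and the covariance $\pi _{A}\left( \alpha _{t}^{\eta }(a)\right) =v_{t}\pi _{A}(a)v_{t}^{\ast }$ of Remark 3.1, to collapse $U_{1}v_{t}^{\Phi }$ to $v_{t}U_{1}$. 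For the second I would test on $\pi _{\Phi }(x)V_{\Phi }h$: pushing $w_{t}^{\Phi }$ through the $\pi _{\Phi }$-covariance of part $(1)$ and relation $(b)$ produces $\pi _{\Phi }\left( \eta _{t}(x)\right) V_{\Phi }u_{t}h$, so that $U_{2}w_{t}^{\Phi }$ sends it to $\pi _{X}\left( \eta _{t}(x)\right) V^{\prime }u_{t}h$ by the defining formula for $U_{2}$; on the other side $w_{t}U_{2}$ sends the same vector to $w_{t}\pi _{X}(x)V^{\prime }h$, which the covariance of $(\pi _{X},v,w)$ together with $v_{t}V^{\prime }=V^{\prime }u_{t}$ turns into the identical $\pi _{X}\left( \eta _{t}(x)\right) V^{\prime }u_{t}h$. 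Since every verification takes place on dense subspaces through bounded operators, passing to closures completes the proof.
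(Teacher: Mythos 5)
Your proposal is correct and follows essentially the same route as the paper: take the covariant Stinespring data of the underlying map $\varphi $ for $(H_{\Phi },V_{\Phi },v^{\Phi })$, restrict $u^{\prime }$ to the invariant subspace $K_{\Phi }=\left[ \Phi (X)H\right] $ to obtain $w^{\Phi }$, verify covariance of $\pi _{\Phi }$ on the spanning vectors $\pi _{\varphi }(a)V_{\varphi }h$ via the identity $\eta _{t}\left( x\alpha _{t^{-1}}^{\eta }(a)\right) =\eta _{t}(x)a$, and deduce part $(2)$ from Theorem 2.2$(2)$ by dense-subspace computations. The only divergence is minor: where you check $v_{t}U_{1}=U_{1}v_{t}^{\Phi }$ directly on the vectors $\pi _{\varphi }(a)V_{\varphi }h$, the paper instead invokes the uniqueness clause of the covariant Stinespring theorem for $\varphi $ to produce an intertwining unitary $U_{1}^{\prime }$ and then shows $U_{1}=U_{1}^{\prime }$ on the dense subspace $\left[ \pi _{\varphi }(A)V_{\Phi }H\right] $ --- both arguments are sound.
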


\begin{proof}
$(1)$ Let $\varphi $ be the completely positive map associated to $\Phi $.
Then, by Remark 3.2, $\varphi $ is $u$-covariant with respect to $(G,\alpha
^{\eta },A)$. Let $\left( \pi _{\varphi },v^{\varphi },H_{\varphi
},V_{\varphi }\right) $ be the covariant Stinespring construction associated
to $\varphi $ (see, for example, \cite{8}). Then $\left( \pi _{\varphi
},H_{\varphi },V_{\varphi }\right) $ is the Stinespring construction
associated to $\varphi $ and by Theorem 2.2, $\left( \pi _{\Phi },H_{\Phi
},K_{\Phi },V_{\Phi },W_{\Phi }\right) $, where $H_{\Phi }=H_{\varphi }$, $%
V_{\Phi }=V_{\varphi },K_{\Phi }=\left[ \Phi (X)H\right] $ , $W_{\Phi }$ is
the projection of $K$ on $K_{\Phi }$ and $\pi _{\Phi }:X\rightarrow
L(H_{\Phi },K_{\Phi })$ is defined by $\pi _{\Phi }\left( x\right) \left(
\tsum\limits_{i=1}^{n}\pi _{\varphi }\left( a_{i}\right) V_{\Phi
}h_{i}\right) $ $=\tsum\limits_{i=1}^{n}\Phi \left( xa_{i}\right) h_{i}$ is
the Sinespring construction associated to $\Phi $. Moreover, the relations $%
(a),$ $(d)$ and $(e)$ are verified.

Let $v_{t}^{\Phi }=v_{t}^{\varphi }$ for all $t\in G$. Then $t\mapsto
v_{t}^{\Phi }$ is a unitary $\ast $-representation of $G$ on $H_{\Phi }$
which verifies the relation $(b)$. Since $\Phi $ is $(u^{\prime },u)$%
-covariant, $u_{t}^{\prime }\left( \tsum\limits_{i=1}^{n}\Phi \left(
x_{i}\right) h_{i}\right) $ $=\tsum\limits_{i=1}^{n}{}\Phi \left( \eta
_{t}\left( x_{i}\right) \right) u_{t}h_{i}$ for all $t\in G$ and for all $%
x_{i}\in X,h_{i}\in H,$ $i=1,...,n,$ and so $\left[ \Phi (X)H\right] $ is
invariant under $u^{\prime }$. Then, since $W_{\Phi }$ is the projection on $%
\left[ \Phi (X)H\right] $, we have $u_{t}^{\prime }W_{\Phi }=W_{\Phi
}u_{t}^{\prime }$. Let $w_{t}^{\Phi }=u_{t}^{\prime }|_{K_{\Phi }}$ for all $%
t\in G$. Then $t\mapsto w_{t}^{\Phi }$ is a unitary $\ast $-representation
of $G$ on $K_{\Phi }$ which verifies the relation $(c)$.

To prove the assertion $(1)$ it remains to show that $\left( \pi _{\Phi
},v^{\Phi },w^{\Phi },H_{\Phi },K_{\Phi }\right) $ is a covariant
representation of $(G,\eta ,X)$. From 
\begin{eqnarray*}
\pi _{\Phi }\left( \eta _{t}\left( x\right) \right) \left(
\tsum\limits_{i=1}^{n}\pi _{\varphi }\left( a_{i}\right) V_{\varphi
}h_{i}\right)  &=&\tsum\limits_{i=1}^{n}\Phi \left( \eta _{t}\left( x\right)
a_{i}\right) h_{i}=\tsum\limits_{i=1}^{n}\Phi \left( \eta _{t}\left( x\alpha
_{t^{-1}}^{\eta }\left( a_{i}\right) \right) \right) h_{i} \\
&=&\tsum\limits_{i=1}^{n}u_{t}^{\prime }\Phi \left( x\alpha _{t^{-1}}^{\eta
}\left( a_{i}\right) \right) u_{t}^{\ast }h_{i}
\end{eqnarray*}%
and 
\begin{eqnarray*}
w_{t}^{\Phi }\pi _{X}\left( x\right) v_{t^{-1}}^{\Phi }\left(
\tsum\limits_{i=1}^{n}\pi _{\varphi }\left( a_{i}\right) V_{\varphi
}h_{i}\right)  &=&w_{t}^{\Phi }\pi _{X}\left( x\right) \left(
\tsum\limits_{i=1}^{n}v_{t^{-1}}^{\Phi }\pi _{\varphi }\left( a_{i}\right)
V_{\varphi }h_{i}\right)  \\
&=&w_{t}^{\Phi }\pi _{X}\left( x\right) \left( \tsum\limits_{i=1}^{n}\pi
_{\varphi }\left( \alpha _{t^{-1}}^{\eta }\left( a_{i}\right) \right)
v_{t^{-1}}^{\Phi }V_{\varphi }h_{i}\right)  \\
&=&w_{t}^{\Phi }\pi _{X}\left( x\right) \left( \tsum\limits_{i=1}^{n}\pi
_{\varphi }\left( \alpha _{t^{-1}}^{\eta }\left( a_{i}\right) \right)
V_{\varphi }u_{t}^{\ast }h_{i}\right)  \\
&=&\tsum\limits_{i=1}^{n}u_{t}^{\prime }\Phi \left( x\alpha _{t^{-1}}^{\eta
}\left( a_{i}\right) \right) u_{t}^{\ast }h_{i}
\end{eqnarray*}%
for all $a_{1},...,a_{n}\in A$ and for all $h_{1},...,h_{n}\in H$, we deduce
that $\pi _{\Phi }\left( \eta _{t}\left( x\right) \right) =w_{t}^{\Phi }\pi
_{X}\left( x\right) v_{t^{-1}}^{\Phi }$ for all $x\in X$ and for all $t\in G$%
. Therefore, $\left( \pi _{\Phi },v^{\Phi },w^{\Phi },H_{\Phi },K_{\Phi
}\right) $ is a covariant representation of $(G,\eta ,X)$.

$(2)$ Since $\left( \pi _{A},v,H^{\prime },V^{\prime }\right) $, where $\pi
_{A}$ is the underlying $\ast $-representation of $\pi _{X},$ is unitarily
equivalent to the covariant Stinespring construction associated to  $\varphi 
$, there is a unitary operator $U_{1}^{\prime }\in L(H_{\Phi },H^{\prime })$
such that $V^{\prime }=U_{1}^{\prime }V_{\Phi }$, $v_{t}U_{1}^{\prime
}=U_{1}^{\prime }v_{t}^{\Phi }\ $ for all $t\in G$, and $U_{1}^{\prime }\pi
_{\varphi }(a)=\pi _{A}(a)U_{1}^{\prime }\ $for all $a\in A$. On the other
hand, $\left( \pi _{\Phi },H_{\Phi },K_{\Phi },V_{\Phi },W_{\Phi }\right) $
is the Stinespring construction associated to $\Phi $, and then by Theorem
2.2 (2), there are two unitary operators $U_{1}\in L(H_{\Phi },H^{\prime })$
and $U_{2}\in L(K_{\Phi },K^{\prime })$ such that: $U_{2}\pi _{\Phi }\left(
x\right) =\pi _{X}\left( x\right) U_{1}\ $ for all $x\in X,$ $V^{\prime
}=U_{1}V_{\Phi }$ and $W^{\prime }=U_{2}W_{\Phi }$. Moreover, 
\begin{equation*}
U_{2}\left( \tsum\limits_{i=1}^{n}\pi _{\Phi }\left( x_{i}\right) V_{\Phi
}h_{i}\right) =\tsum\limits_{i=1}^{n}\pi _{X}\left( x_{i}\right) V^{\prime
}h_{i}
\end{equation*}%
for all $x_{1},...,x_{n}\in X$ and for all $h_{1},...,h_{n}\in H$, whence 
\begin{eqnarray*}
w_{t}U_{2}\left( \tsum\limits_{i=1}^{n}\pi _{\Phi }\left( x_{i}\right)
V_{\Phi }h_{i}\right)  &=&w_{t}\left( \tsum\limits_{i=1}^{n}\pi _{X}\left(
x_{i}\right) V^{\prime }h_{i}\right) =\tsum\limits_{i=1}^{n}\pi _{X}\left(
\eta _{t}\left( x_{i}\right) \right) v_{t}V^{\prime }h_{i} \\
&=&\tsum\limits_{i=1}^{n}\pi _{X}\left( \eta _{t}\left( x_{i}\right) \right)
V^{\prime }u_{t}h_{i}=U_{2}\left( \tsum\limits_{i=1}^{n}\pi _{\Phi }\left(
\eta _{t}\left( x_{i}\right) \right) V_{\Phi }u_{t}h_{i}\right)  \\
&=&U_{2}\left( \tsum\limits_{i=1}^{n}w_{t}^{\Phi }\pi _{\Phi }\left(
x_{i}\right) V_{\Phi }h_{i}\right) =U_{2}w_{t}^{\Phi }\left(
\tsum\limits_{i=1}^{n}\pi _{\Phi }\left( x_{i}\right) V_{\Phi }h_{i}\right) 
\end{eqnarray*}%
and so $w_{t}U_{2}=U_{2}w_{t}^{\Phi }$ for all $t\in G.$

From $U_{2}\pi _{\Phi }\left( x\right) =\pi _{X}\left( x\right) U_{1}$ for
all $x\in X,$ we deduce that $U_{1}\pi _{\varphi }(a)=\pi _{A}(a)U_{1}\ $for
all $a\in A$ and then 
\begin{equation*}
U_{1}^{\prime }\left( \pi _{\varphi }(a)V_{\Phi }h\right) =\pi
_{A}(a)U_{1}^{\prime }V_{\Phi }h=\pi _{A}(a)V^{\prime }h=\pi
_{A}(a)U_{1}V_{\Phi }h=U_{1}\left( \pi _{\varphi }(a)V_{\Phi }h\right)
\end{equation*}%
for all $a\in A$ and for all $h\in H$. From this relation, since $\left[ \pi
_{\varphi }(A)V_{\Phi }H\right] $ $=H$, we deduce that $U_{1}=U_{1}^{\prime
} $, and the assertion is proved.
\end{proof}

\section{Covariant completely positive maps and crossed products of Hilbert $%
C^{\ast }$-modules}

Let $(G,\eta ,X)$ be a dynamical system on Hilbert $C^{\ast }$-modules. The
linear space $C(G,X)$ of all continuous functions from $G$ to $X$ with
compact support has a structure of pre-Hilbert $G\times _{\alpha ^{\eta }}A$%
-module with the action of $G\times _{\alpha ^{\eta }}A$ on $C(G,X)$ given
by 
\begin{equation*}
\left( \widehat{x}f\right) \left( s\right) =\tint\limits_{G}\widehat{x}%
\left( t\right) \alpha _{t}^{\eta }\left( f\left( t^{-1}s\right) \right) dt
\end{equation*}%
for all $\widehat{x}\in C(G,X)$ and $f\in C(G,A)$ and the inner product
given by 
\begin{equation*}
\left\langle \widehat{x},\widehat{y}\right\rangle \left( s\right)
=\tint\limits_{G}\alpha _{t^{-1}}^{\eta }\left( \left\langle \widehat{x}(t),%
\widehat{y}\left( ts\right) \right\rangle \right) dt.
\end{equation*}%
The crossed product of $X$ by $\eta $, denoted by $G\times _{\eta }X$, is
the Hilbert $G\times _{\alpha ^{\eta }}A$-module obtained by the completion
of the pre-Hilbert $G\times _{\alpha ^{\eta }}A$-module $C(G,X)$ \cite{4,6}

Any covariant representation $\left( \pi _{X},v,w,H,K\right) $ of $\left(
G,\eta ,X\right) $ induces a representation $\left( \pi _{X}\times
v,H,K\right) $ of $G\times _{\eta }X$ such that%
\begin{equation*}
\left( \pi _{X}\times v\right) \left( \widehat{x}\right)
=\tint\limits_{G}\pi _{X}\left( \widehat{x}\left( t\right) \right) v_{t}dt
\end{equation*}%
for all $\widehat{x}\in C(G,X)$. Moreover, the underlying $\ast $%
-representation of $\pi _{X}\times v$ is the integral form of the covariant
representation $\left( \pi _{A},v,H\right) $ of $(G,\alpha ^{\eta },A)$
induced by $\left( \pi _{X},v,H,K\right) $ \cite{6}.

\begin{remark}
If $\left( \pi _{X},v,w,H,K\right) $ is a nondegenerate covariant
representation of $\left( G,\eta ,X\right) $, then its integral form $\left(
\pi _{X}\times v,H,K\right) $ is nondegenerate

Indeed, let $f\in C(G,A)$ and $x\in X$. Then $f_{x}\in C(G,X)$, where $%
f_{x}(s)=xf(s)$, 
\begin{equation*}
\left( \pi _{X}\times v\right) \left( f_{x}\right) =\tint\limits_{G}\pi
_{X}\left( xf\left( t\right) \right) v_{t}dt=\tint\limits_{G}\pi _{X}\left(
x\right) \pi _{A}\left( f\left( t\right) \right) v_{t}dt=\pi _{X}\left(
x\right) \left( \pi _{A}\times v\right) \left( f\right)
\end{equation*}%
and 
\begin{equation*}
\left( \pi _{X}\times v\right) \left( f_{x}\right) ^{\ast }=\left( \pi
_{A}\times v\right) \left( f\right) ^{\ast }\pi _{X}\left( x\right) ^{\ast }.
\end{equation*}%
From these facts and taking into account that $\left( \pi _{A}\times
v,H,K\right) $ and $\left( \pi _{X},v,H,K\right) $ are nondegenerate we
deduce that $\left[ \left( \pi _{X}\times v\right) \left( X\right) H\right]
=K$ and $\left[ \left( \pi _{X}\times v\right) \left( X\right) ^{\ast }K%
\right] =H.$
\end{remark}

\begin{proposition}
Let $\Phi :X\rightarrow L(H,K)$ be a completely positive map, $\left(
u^{\prime },u\right) $-covariant with respect to $(G,\eta ,X)$. Then there
is a completely positive map $\widehat{\Phi }:G\times _{\eta }X\rightarrow
L(H,K)$ such that 
\begin{equation*}
\widehat{\Phi }\left( \widehat{x}\right) =\tint\limits_{G}\Phi \left( 
\widehat{x}\left( t\right) \right) u_{t}dt
\end{equation*}%
for all $\widehat{x}\in C(G,X)$. Moreover, the completely positive map
associated to $\widehat{\Phi }$ is the map $\widehat{\varphi }:G\times
_{\alpha ^{\eta }}A\rightarrow L(H)$ such that 
\begin{equation*}
\widehat{\varphi }\left( f\right) =\tint\limits_{G}\varphi \left( f\left(
t\right) \right) u_{t}dt
\end{equation*}%
for all $f\in C\left( G,A\right) .$
\end{proposition}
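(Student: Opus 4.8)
The plan is to realize $\widehat{\Phi}$ as a compression of a genuine representation of the crossed product and then quote Proposition 2.1. First I would apply Theorem 3.4 $(1)$ to the $\left( u^{\prime },u\right) $-covariant completely positive map $\Phi $, which produces a covariant representation $\left( \pi _{\Phi },v^{\Phi },w^{\Phi },H_{\Phi },K_{\Phi }\right) $ of $(G,\eta ,X)$, a bounded operator $V_{\Phi }:H\rightarrow H_{\Phi }$ and a coisometry $W_{\Phi }:K\rightarrow K_{\Phi }$ with $\Phi \left( x\right) =W_{\Phi }^{\ast }\pi _{\Phi }\left( x\right) V_{\Phi }$ for all $x\in X$ and the intertwining relations $v_{t}^{\Phi }V_{\Phi }=V_{\Phi }u_{t}$ and $w_{t}^{\Phi }W_{\Phi }=W_{\Phi }u_{t}^{\prime }$ for all $t\in G$.

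Next I would pass to the integral form. As recalled before Remark 4.1, the covariant representation $\left( \pi _{\Phi },v^{\Phi },w^{\Phi },H_{\Phi },K_{\Phi }\right) $ induces a representation $\left( \pi _{\Phi }\times v^{\Phi },H_{\Phi },K_{\Phi }\right) $ of $G\times _{\eta }X$ given by $\left( \pi _{\Phi }\times v^{\Phi }\right) \left( \widehat{x}\right) =\int _{G}\pi _{\Phi }\left( \widehat{x}\left( t\right) \right) v_{t}^{\Phi }\,dt$ for $\widehat{x}\in C(G,X)$, whose underlying $\ast $-representation is the integral form $\pi _{\varphi }\times v^{\varphi }$ of the covariant representation $\left( \pi _{\varphi },v^{\varphi },H_{\Phi }\right) $ of $(G,\alpha ^{\eta },A)$ (here $v^{\Phi }=v^{\varphi }$ as in the proof of Theorem 3.4). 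The key computation is then to check that, for every $\widehat{x}\in C(G,X)$,
\[
W_{\Phi }^{\ast }\left( \pi _{\Phi }\times v^{\Phi }\right) \left( \widehat{x}\right) V_{\Phi }=\int _{G}W_{\Phi }^{\ast }\pi _{\Phi }\left( \widehat{x}\left( t\right) \right) v_{t}^{\Phi }V_{\Phi }\,dt=\int _{G}W_{\Phi }^{\ast }\pi _{\Phi }\left( \widehat{x}\left( t\right) \right) V_{\Phi }u_{t}\,dt=\int _{G}\Phi \left( \widehat{x}\left( t\right) \right) u_{t}\,dt=\widehat{\Phi }\left( \widehat{x}\right) ,
\]
where I pull the bounded operators $W_{\Phi }^{\ast }$ and $V_{\Phi }$ through the integral and use $v_{t}^{\Phi }V_{\Phi }=V_{\Phi }u_{t}$ together with $\Phi \left( x\right) =W_{\Phi }^{\ast }\pi _{\Phi }\left( x\right) V_{\Phi }$.

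With this factorization in hand, Proposition 2.1 applies directly: $\pi _{\Phi }\times v^{\Phi }$ is a representation of $G\times _{\eta }X$, $W_{\Phi }$ is a coisometry and $V_{\Phi }$ is bounded, so $\widehat{\Phi }\left( \cdot \right) =W_{\Phi }^{\ast }\left( \pi _{\Phi }\times v^{\Phi }\right) \left( \cdot \right) V_{\Phi }$ is completely positive, with associated completely positive map $g\mapsto V_{\Phi }^{\ast }\left( \pi _{\varphi }\times v^{\varphi }\right) \left( g\right) V_{\Phi }$ on $G\times _{\alpha ^{\eta }}A$. Finally I would identify this associated map on the dense subalgebra $C(G,A)$ by the analogous computation
\[
V_{\Phi }^{\ast }\left( \pi _{\varphi }\times v^{\varphi }\right) \left( f\right) V_{\Phi }=\int _{G}V_{\Phi }^{\ast }\pi _{\varphi }\left( f\left( t\right) \right) v_{t}^{\varphi }V_{\Phi }\,dt=\int _{G}V_{\Phi }^{\ast }\pi _{\varphi }\left( f\left( t\right) \right) V_{\Phi }u_{t}\,dt=\int _{G}\varphi \left( f\left( t\right) \right) u_{t}\,dt=\widehat{\varphi }\left( f\right) ,
\]
using again $v_{t}^{\varphi }V_{\Phi }=V_{\Phi }u_{t}$ and $\varphi \left( a\right) =V_{\Phi }^{\ast }\pi _{\varphi }\left( a\right) V_{\Phi }$.

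The main obstacle I anticipate is not the algebra but the analytic bookkeeping: justifying that the operator-valued integrals converge in the strong operator topology and that the bounded operators $W_{\Phi }^{\ast }$, $V_{\Phi }$ may legitimately be moved inside the integral, and then arguing that the formula, a priori defined only on the dense submodule $C(G,X)$, extends to all of $G\times _{\eta }X$. This last point is handled automatically by the factorization through the representation $\pi _{\Phi }\times v^{\Phi }$ of the full crossed product: the map $W_{\Phi }^{\ast }\left( \pi _{\Phi }\times v^{\Phi }\right) \left( \cdot \right) V_{\Phi }$ is already norm-bounded and completely positive on all of $G\times _{\eta }X$, and the integral expression merely records its values on $C(G,X)$.
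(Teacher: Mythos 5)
Your proposal is correct and follows essentially the same route as the paper: both define $\widehat{\Phi }$ as the compression $W_{\Phi }^{\ast }\left( \pi _{\Phi }\times v^{\Phi }\right) \left( \cdot \right) V_{\Phi }$ of the integral form of the covariant Stinespring representation of $\Phi $, verify the integral formula on $C(G,X)$ via the intertwining relation $v_{t}^{\Phi }V_{\Phi }=V_{\Phi }u_{t}$, and identify the underlying completely positive map with $\widehat{\varphi }$ through the Stinespring construction $\left( \pi _{\varphi }\times v^{\varphi },H_{\Phi },V_{\Phi }\right) $. The only cosmetic difference is that you invoke Proposition 2.1 where the paper cites its covariant analogue (Proposition 3.3), and you check the associated map directly on $C(G,A)$ rather than via inner products; both variants are equivalent.
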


\begin{proof}
Let $\left( \pi _{\Phi },v^{\Phi },w^{\Phi },H_{\Phi },K_{\Phi },V_{\Phi
},W_{\Phi }\right) $ be the covariant Stinespring construction associated to 
$\Phi $. Consider the map $\widehat{\Phi }:G\times _{\eta }X\rightarrow
L(H,K)$ defined by 
\begin{equation*}
\widehat{\Phi }\left( z\right) =W_{\Phi }^{\ast }\left( \pi _{\Phi }\times
v^{\Phi }\right) \left( z\right) V_{\Phi }.
\end{equation*}%
By Proposition 3.3 , $\widehat{\Phi }$ is completely positive and

\begin{eqnarray*}
\widehat{\Phi }\left( \widehat{x}\right)  &=&W_{\Phi }^{\ast }\left( \pi
_{\Phi }\times v^{\Phi }\right) \left( \widehat{x}\right) V_{\Phi
}=\tint\limits_{G}W_{\Phi }^{\ast }\pi _{\Phi }\left( \widehat{x}\left(
t\right) \right) v_{t}^{\Phi }V_{\Phi }dt \\
&=&\tint\limits_{G}W_{\Phi }^{\ast }\pi _{\Phi }\left( \widehat{x}\left(
t\right) \right) V_{\Phi }u_{t}dt=\tint\limits_{G}\Phi \left( \widehat{x}%
\left( t\right) \right) u_{t}dt
\end{eqnarray*}%
for all $\widehat{x}\in C(G,X)$. Since $\left( \pi _{\varphi }\times
v^{\varphi },H_{\Phi },V_{\Phi }\right) $ is the Stinespring construction
associated to $\widehat{\varphi }:G\times _{\alpha ^{\eta }}A\rightarrow L(H)
$ with%
\begin{equation*}
\widehat{\varphi }\left( f\right) =\tint\limits_{G}\varphi \left( f\left(
t\right) \right) u_{t}dt
\end{equation*}%
for all $f\in C\left( G,A\right) ,$ we have 
\begin{eqnarray*}
\left\langle \widehat{\Phi }\left( z_{1}\right) ,\widehat{\Phi }\left(
z_{1}\right) \right\rangle  &=&V_{\Phi }^{\ast }\left\langle \left( \pi
_{\Phi }\times v^{\Phi }\right) \left( z_{1}\right) ,\left( \pi _{\Phi
}\times v^{\Phi }\right) \left( z_{1}\right) \right\rangle V_{\Phi } \\
&=&V_{\Phi }^{\ast }\left( \pi _{\varphi }\times v^{\Phi }\right) \left(
\left\langle z_{1},z_{1}\right\rangle \right) V_{\Phi }=\widehat{\varphi }%
\left( \left\langle z_{1},z_{1}\right\rangle \right) 
\end{eqnarray*}%
for all $z_{1},z_{2}\in G\times _{\eta }X.$ Therefore, the completely
positive map associated to $\widehat{\Phi }$ is $\widehat{\varphi }$.
\end{proof}

\begin{remark}
Let $\Phi :X\rightarrow L(H,K)$ be a completely positive map, $\left(
u^{\prime },u\right) $-covariant with respect to $(G,\eta ,X)$. If $\left(
\pi _{\Phi },v^{\Phi },w^{\Phi },H_{\Phi },K_{\Phi },V_{\Phi },W_{\Phi
}\right) $ is the covariant Stinespring construction associated to $\Phi $,
then $\left( \pi _{\Phi }\times v^{\Phi },H_{\Phi },K_{\Phi },V_{\Phi
},W_{\Phi }\right) $ is the Stinespring construction associated to $\widehat{%
\Phi }$. Indeed, we have 
\begin{eqnarray*}
&&\left[ \{\left( \pi _{\Phi }\times v^{\Phi }\right) \left( f_{x}\right)
V_{\Phi }h;x\in X,f\in C(G,A),h\in H\}\right]  \\
&=&\left[ \pi _{\Phi }\left( X\right) \pi _{\varphi }(C(G,A))V_{\Phi }H%
\right] =\left[ \pi _{\Phi }\left( x\right) H\right] =K
\end{eqnarray*}%
and 
\begin{eqnarray*}
&&\left[ \{\left( \pi _{\Phi }\times v^{\Phi }\right) \left( f_{x}\right)
^{\ast }W_{\Phi }^{\ast }k;x\in X,f\in C(G,A),k\in K\}\right]  \\
&=&\left[ \pi _{\varphi }(C(G,A))^{\ast }\pi _{\Phi }\left( X\right) ^{\ast
}W_{\Phi }^{\ast }K\right] =\left[ \pi _{\varphi }(C(G,A))H\right] =H.
\end{eqnarray*}%
From these relations and taking into account that the map $\widehat{\Phi }$
is defined by $\widehat{\Phi }\left( z\right) =W_{\Phi }^{\ast }\left( \pi
_{\Phi }\times v^{\Phi }\right) \left( z\right) V_{\Phi }$, we deduce that $%
\left( \pi _{\Phi }\times v^{\Phi },H_{\Phi },K_{\Phi },V_{\Phi },W_{\Phi
}\right) $ is the Stinespring construction associated to $\widehat{\Phi }$
(see, Theorem 2.2).
\end{remark}

\textbf{Acknowledgement. }The author would like to thank Professor M. Skeide
for his useful comments.

\end{document}